\documentclass[a4paper,12pt, reqno]{amsart}

\usepackage{amsfonts, color}
\usepackage[textwidth=460pt]{geometry}
\usepackage{amstext, amsthm, amssymb, amsmath, mathtools}
\usepackage{mathrsfs}
\usepackage{graphicx}
\usepackage{color}
\usepackage[linesnumbered,lined,ruled,commentsnumbered]{algorithm2e}

\usepackage{hyperref, cleveref}
\hypersetup{colorlinks = true, citecolor = blue}

\theoremstyle{plain}
\begingroup
\newtheorem{theorem}{Theorem}[section]
\newtheorem{lemma}[theorem]{Lemma}
\newtheorem{proposition}[theorem]{Proposition}
\newtheorem{corollary}[theorem]{Corollary}
\endgroup

\theoremstyle{definition}
\begingroup
\newtheorem{defn}{Definition}

\endgroup

\theoremstyle{remark}
\newtheorem{remark}{Remark}

\numberwithin{equation}{section}

\newcommand{\R}{\mathbb{R}}

\newcommand{\M}{\mathcal{M}}

\newcommand{\NN}{\mathbb{N}}

\newcommand{\Rext}{\mathbb{R}\cup\{+\infty\}}

\crefname{theorem}{Theorem}{Theorems}
\crefname{lemma}{Lemma}{Lemmas}
\crefname{corollary}{Corollary}{Corollaries}
\crefname{section}{Section}{Sections}
\crefname{proposition}{Proposition}{Proposition}
\crefname{defn}{Definition}{Definitions}
\crefname{remark}{Remark}{Remarks}
\crefname{conjecture}{Conjecture}{Conjectures}
\crefname{example}{Example}{Examples}
\crefname{assumption}{Assumption}{Assumptions}

\title[Trade-off Invariance for Weighted Scalarizations]
{Trade-off Invariance for Weighted Scalarizations in Multi-objective Optimization}

\author[J.~Klemenc]{Jona Klemenc$^{1,2}$}
\author[A.~Scagliotti]{Alessandro Scagliotti$^{1,2}$}

\thanks{$^1$CIT School, Technical University of Munich, Garching bei M\"unchen, Germany.}
\thanks{$^2$Munich Center for Machine Learning (MCML), Munich, Germany.}
\thanks{Email addresses: \texttt{jona.klemenc@tum.de}, \texttt{scag@ma.tum.de}.}

\newcommand{\FundingLogos}{%
  \raisebox{0pt}{\includegraphics[height=1.5cm]{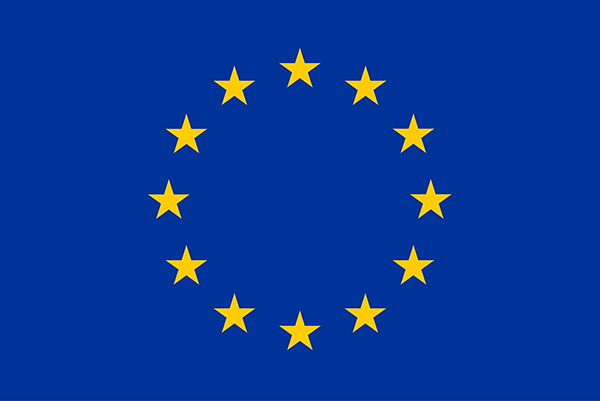}}%
  \hspace{1em}%
  \raisebox{0pt}{\includegraphics[height=1.5cm]{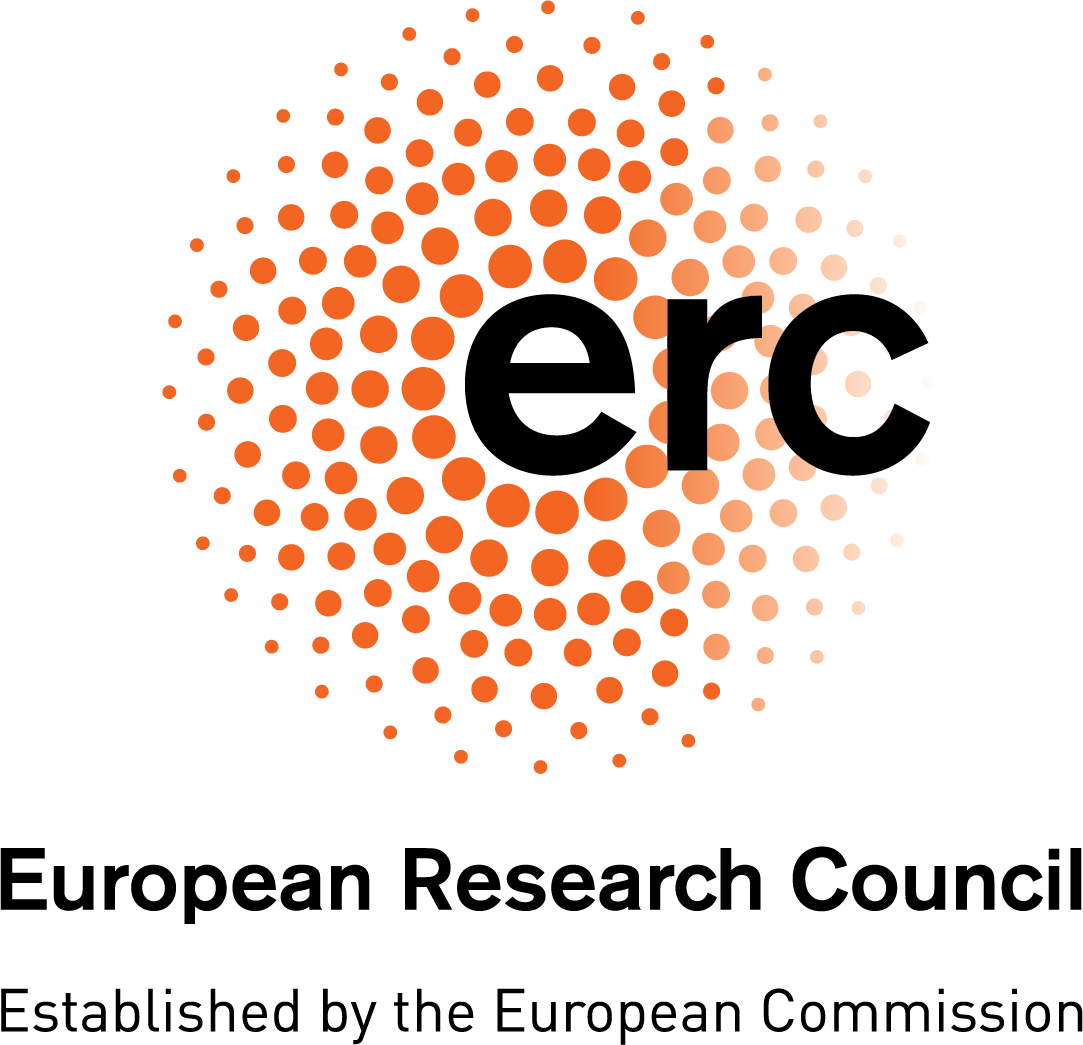}}%
}

\usepackage{tcolorbox}
\tcbset{colback=white,boxrule=0.5pt,arc=2pt,left=4pt,right=4pt,top=4pt,bottom=4pt}

\begin{document}

\begin{abstract}
We consider weighted-sum scalarizations for an abstract multi-objective minimization problem defined by the vector-valued map $U\ni u\mapsto \big( f_1(u),\ldots, f_N(u)\big)$,
where $U$ is an arbitrary nonempty set and no topology, convexity, compactness, or lower semicontinuity assumption is imposed. Using the open simplex as parameter space for positive weights, we show that the Trade-off Invariance Principle for scalarizations yields a generic uniqueness property in the objective space. 
Namely, for almost every weight vector, all minimizers of the corresponding weighted-sum scalarization have the same objective vector. Moreover, excluding again a null-measure subset, all minimizing sequences determine the same limiting objective vector, independently of the chosen sequence.
We also give a geometric interpretation of these results in the attainable objective set: for almost every positive weight vector, the scalarization exposes at most one nondominated point.
Moreover, minimizing sequences determine at most one asymptotically exposed objective vector in the closure of the attainable set.
\end{abstract}

\maketitle

\section{Introduction}

Weighted-sum scalarization is one of the most classical tools in multi-objective optimization. Given several competing objective functionals, one replaces the vector-valued problem by a family of scalar minimization problems obtained as positive linear combinations of the objectives. This viewpoint is standard in the theory of multicriteria optimization and is closely related to the geometric picture of efficient and nondominated points in the objective space; see, for instance, \cite{Ehrgott2005,Miettinen1998}.

In this paper, we consider a multi-objective minimization problem of the form
\begin{equation} \label{eq:intro_MOO}
    \min_{u\in U} f(u) =  \min_{u\in U}\bigl(f_1(u),\dots,f_N(u)\bigr),
\end{equation}
and we investigate a basic question about weighted-sum scalarizations
\begin{equation}\label{eq:intro-weighted-sum}
    H_\lambda(u)
    \coloneqq
    \sum_{i=1}^N \lambda_i f_i(u),
    \qquad u\in U,
\end{equation}
in a completely abstract setting.
Here, $U$ is an arbitrary nonempty set, the objectives $f_i$ take values in $\R\cup\{+\infty\}$, and $\lambda \in (0, +\infty)^N$ is the weight vector. No topology, metric, convexity, compactness, or lower semicontinuity assumption is imposed. For a fixed positive weight vector, one may ask whether different minimizers of the corresponding weighted-sum scalarization can yield different objective vectors. More generally, even when minimizers do not exist, one may ask whether different minimizing sequences can produce different limiting objective values. In other words, is the trade-off selected by the scalarization intrinsic to the weight vector, or can it depend on the particular minimizing sequence?

Our motivation comes from the recent Trade-off Invariance Principle introduced in \cite{Fornasier2026} for regularized functionals of the form $F+\sum_{j=1}^d \alpha_j G_j$ with $(\alpha_1,\ldots,\alpha_d)\in (0,\infty)^d$. In that framework, it was shown that, outside a negligible exceptional set of parameters $E\subset (0,\infty)^d$, all minimizers have the same trade-off, and all minimizing sequences determine the same limiting value of each non-reference term. The aim of the present work is to transfer that principle to the symmetric multi-objective framework of weighted-sum scalarizations.

A first point is that weighted-sum scalarizations $\sum_{i=1}^N\lambda_i f_i$ of \cref{eq:intro_MOO} are naturally parametrized by the simplex of positive weights $\Delta_N^\circ$, rather than the whole positive orthant. After choosing any objective as a reference term, the weighted sum can be rewritten in the form treated in \cite{Fornasier2026}. This reduction allows us to transport the negligible exceptional sets from the anchored coordinates to the simplex of weights. 
On this basis, in \cref{thm:objective-invariance-minimizers} we prove that, for almost every positive weight vector, all minimizers of the scalarized problem have the same objective vector. 
Then, in \cref{thm:objective-invariance-min-seq} we establish the analogous statement for minimizing sequences, showing that, outside another negligible set, the limiting objective vector is uniquely determined by the scalarization whenever the scalarized infimum is not $-\infty$.

These results admit a natural geometric interpretation in the attainable objective set $f(U)$. For a fixed weight vector $\lambda \in \Delta_N^\circ$, the scalarization selects the subset of attainable objective vectors on which the corresponding linear functional attains its minimum. Our minimizer result shows that, for almost every positive weight vector, this exposed subset contains at most one point. In the finite-valued setting (i.e., $f_i(u)\neq +\infty$ for every $u\in U$ and every $i=1,\ldots,N$), the minimizing-sequence result yields an asymptotic counterpart: minimizing sequences determine at most one asymptotically exposed objective vector in the closure of the attainable set. Thus, generically, weighted-sum scalarization selects a unique objective-space trade-off not only at the exact level, but also at the asymptotic level.

The paper is organized as follows. In \cref{section:abstract-setup} we introduce the abstract multi-objective setting, the simplex parametrization of positive weights, and the reduction to the anchored framework. In \cref{section:generic-uniqueness} we prove the generic uniqueness statements for minimizers and minimizing sequences, and provide their geometric interpretation in the attainable objective set, including the notion of asymptotically exposed objective vector.

\section{Abstract setup for multi-objective optimization} \label{section:abstract-setup}

We consider an abstract multi-objective minimization problem of the form
\begin{equation}\label{eq:MOO}
    \min_{u\in U} f(u)
    =
    \min_{u\in U}
    \big(f_1(u),\dots,f_N(u)\big),
\end{equation}
where $N\geq 2$, $U\neq\varnothing$ is an arbitrary set, and $f_i\colon U\to \Rext$ for $i=1,\dots,N$, without further assumptions and structure on $U$ or on the objective functionals.
The purpose of this abstract formulation is to isolate a purely variational feature of weighted-sum scalarizations, independently of the analytic structure of the underlying problem.

We assume only that the multi-objective problem is not identically infeasible, namely
\begin{equation*}
    \operatorname{dom} f
    \coloneqq
    \bigl\{
        u\in U:
        f_i(u)<+\infty
        \text{ for every } i=1,\dots,N
    \bigr\}
    \neq \varnothing.
\end{equation*}
Since only the relative proportions of the weights are relevant, we parametrize weighted scalarizations by the open simplex
\begin{equation}\label{eq:open-simplex}
    \Delta_N^\circ
    \coloneqq
    \biggl\{
        \lambda\in(0,+\infty)^N:
        \sum_{i=1}^N \lambda_i =1
    \biggr\}.
\end{equation}
For $\lambda=(\lambda_1,\dots,\lambda_N)\in\Delta_N^\circ$, we define the weighted-sum scalarization
\begin{equation}\label{eq:weighted-sum}
    H_\lambda(u)
    \coloneqq
    \sum_{i=1}^N \lambda_i f_i(u),
    \qquad u\in U.
\end{equation}
Its value function is
\begin{equation}\label{eq:value-function}
    V(\lambda)
    \coloneqq
    \inf_{u\in U} H_\lambda(u)
    =
    \inf_{u\in U}\sum_{i=1}^N \lambda_i f_i(u),
\end{equation}
and the set of minimizers is
\begin{equation}\label{eq:set_minimizers}
    \M_\lambda
    \coloneqq
    \operatorname*{argmin}_{u\in U} H_\lambda(u).
\end{equation}
Thus $u\in\M_\lambda$ if and only if $H_\lambda(u)=V(\lambda)$.
We shall also consider minimizing sequences: a sequence $(u_k)_{k\in\NN}\subset U$ is called a minimizing sequence for $H_\lambda$ if $H_\lambda(u_k)\to V(\lambda)$ as $k\to\infty$.

The guiding question is whether non-uniqueness for the scalarized problem may correspond to non-uniqueness in the objective space. More precisely, if $\M_\lambda$ contains several elements, can one have $u_1,u_2\in\M_\lambda$ with $f(u_1)\neq f(u_2)$?
Similarly, if different minimizing sequences are considered, are the limiting objective values intrinsic to the scalarization, or can they depend on the particular sequence chosen?

\subsection{Equivalent parametrization by the positive orthant}

Although we use $\Delta_N^\circ$ as the primary parameter space, it is sometimes convenient to work on the whole positive orthant $(0,+\infty)^N$.
The reason is that two positive weight vectors lying on the same ray define the same scalarized minimization problem.

\begin{lemma}[Invariance under positive rescaling]\label{lem:ray-invariance}
Let $\lambda\in(0,+\infty)^N$ and let $c>0$.
Then the scalarized functionals $H_\lambda$ and $H_{c\lambda}$ have the same minimizers and the same minimizing sequences.
\end{lemma}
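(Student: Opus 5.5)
The plan is to reduce everything to the pointwise identity $H_{c\lambda}(u)=cH_\lambda(u)$ for every $u\in U$, with the arithmetic conventions on $\Rext$. Since $c>0$ and $\lambda_i>0$, each $c\lambda_i f_i(u)$ is either a real number or $+\infty$. Moreover, $c\lambda_i f_i(u)=+\infty$ exactly when $f_i(u)=+\infty$. Hence $H_{c\lambda}(u)=+\infty$ if and only if $H_\lambda(u)=+\infty$, and otherwise $H_{c\lambda}(u)=cH_\lambda(u)$ by linearity of finite sums. Adopting the convention $c\cdot(+\infty)=+\infty$, the identity therefore holds on all of $U$. No indeterminate forms arise, because the objectives never take the value $-\infty$.

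Next, I pass to the infimum. The map $t\mapsto ct$ is an order-preserving bijection of $\R\cup\{\pm\infty\}$ when $c>0$, with the conventions $c(\pm\infty)=\pm\infty$. It therefore commutes with infima, which gives $V(c\lambda)=\inf_u cH_\lambda(u)=cV(\lambda)$. Here I use the notation $V$ for any positive weight vector, not only for weights in the simplex. Note that $V(\lambda)$ may equal $-\infty$; it cannot equal $+\infty$, since $\operatorname{dom} f\neq\varnothing$.

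Minimizers then follow directly: $u\in\operatorname{argmin}H_{c\lambda}$ if and only if $cH_\lambda(u)=cV(\lambda)$, which holds if and only if $H_\lambda(u)=V(\lambda)$, by injectivity of multiplication by $c$ on $\Rext$. For minimizing sequences, I need $cH_\lambda(u_k)\to cV(\lambda)$ if and only if $H_\lambda(u_k)\to V(\lambda)$. This follows because $t\mapsto ct$ is a homeomorphism of the extended real line with inverse $t\mapsto t/c$. When $V(\lambda)$ is finite, this is just continuity of scalar multiplication, applied eventually, since a convergent sequence is eventually finite. When $V(\lambda)=-\infty$, it reads: $H_\lambda(u_k)\to-\infty$ if and only if $cH_\lambda(u_k)\to-\infty$, which is immediate.

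There is no real obstacle here. The only point needing care is the bookkeeping with the value $+\infty$ in the objectives and the value $-\infty$ in the infimum. I would handle these once, by the homeomorphism remark, rather than case by case.
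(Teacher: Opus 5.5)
Your proof is correct and follows the same route as the paper, which notes only that $H_{c\lambda}(u)=c\,H_\lambda(u)$ for every $u\in U$ and that multiplication by $c>0$ preserves minimizers and minimizing sequences. Your extra care with the value $+\infty$ in the objectives and with the case $V(\lambda)=-\infty$, via the order-preserving homeomorphism $t\mapsto ct$ of the extended line, makes explicit what the paper leaves implicit.
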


\begin{proof}
For every $u\in U$ one has $H_{c\lambda}(u)=c\,H_\lambda(u)$.
Since $c>0$, multiplication by $c$ preserves minimizers and minimizing sequences.
\end{proof}

Hence the scalarized problem depends only on the positive ray $[\lambda] \coloneqq \{c\lambda:c>0\}$. 
The simplex $\Delta_N^\circ$ is precisely a section of the positive rays, since every $\lambda\in(0,+\infty)^N$ admits the unique normalization
\begin{equation*}
    \widehat\lambda
    \coloneqq
    \frac{\lambda}{\sum_{i=1}^N\lambda_i}
    \in\Delta_N^\circ.
\end{equation*}

The trade-off invariance principle in \cite{Fornasier2026} is formulated for families of the form $F+\sum_{j=1}^m \alpha_j G_j$, where one term is singled out as a reference.
The next lemma shows that, after choosing any component as reference, the simplex parametrization is fully compatible with that framework.

\begin{lemma}[Reduction to the anchored form]\label{lem:normalization}
Let $\lambda\in\Delta_N^\circ$, and fix $k\in\{1,\dots,N\}$.
Define $\alpha_i
    \coloneqq
    {\lambda_i}/{\lambda_k}$ for every $i\neq k$.\\
Then the scalarized functional $H_\lambda = \sum_{i=1}^N \lambda_i f_i$ has the same minimizers and the same minimizing sequences as $\widetilde H_\alpha^{(k)}
    \coloneqq
    f_k +\sum_{i\neq k}\alpha_i f_i$.
\end{lemma}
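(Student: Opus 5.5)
The plan is to show that $\widetilde H_\alpha^{(k)}$ is a positive rescaling of $H_\lambda$ and then apply \cref{lem:ray-invariance}. Since $\lambda\in\Delta_N^\circ$, we have $\lambda_k>0$. Hence each $\alpha_i=\lambda_i/\lambda_k$ is well defined and strictly positive, so $\alpha\in(0,+\infty)^{N-1}$ is an admissible parameter for the anchored family.

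The first step is to check, pointwise for every $u\in U$, the identity
\begin{equation*}
    \widetilde H_\alpha^{(k)}(u)
    = f_k(u)+\sum_{i\neq k}\frac{\lambda_i}{\lambda_k}f_i(u)
    = \frac{1}{\lambda_k}\sum_{i=1}^N\lambda_i f_i(u)
    = \frac{1}{\lambda_k}H_\lambda(u).
\end{equation*}
This needs care because the $f_i$ take values in $\Rext$. Since all coefficients are strictly positive and no value $-\infty$ occurs, every sum is well defined in $\Rext$ with no $\infty-\infty$ ambiguity. Multiplication by the positive constant $1/\lambda_k$ is compatible with the convention $c\cdot(+\infty)=+\infty$ for $c>0$. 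So the identity holds in $\Rext$, including at points outside $\operatorname{dom} f$.

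Next, I will apply \cref{lem:ray-invariance} with $c=1/\lambda_k$. This gives $\widetilde H_\alpha^{(k)}=H_{c\lambda}$, where $c\lambda=(\lambda_1/\lambda_k,\dots,1,\dots,\lambda_N/\lambda_k)$ has entry $1$ in position $k$. By that lemma, $H_{c\lambda}$ and $H_\lambda$ have the same minimizers and the same minimizing sequences.

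To handle minimizing sequences explicitly, I will also note that the infima satisfy
\begin{equation*}
    \inf_U \widetilde H_\alpha^{(k)}=\frac{1}{\lambda_k}V(\lambda).
\end{equation*}
This covers the case $V(\lambda)=-\infty$ as well. Since $t\mapsto t/\lambda_k$ is a homeomorphism of $[-\infty,+\infty]$, the convergence $H_\lambda(u_k)\to V(\lambda)$ is equivalent to the convergence of $\widetilde H_\alpha^{(k)}(u_k)$ to its infimum.

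I do not expect any step to be a real obstacle. The only points needing care are the positivity of $\lambda_k$ and the extended-real arithmetic, both handled above.
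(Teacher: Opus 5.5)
Your proposal is correct and follows essentially the paper's argument: the paper observes $H_\lambda=\lambda_k\widetilde H_\alpha^{(k)}$ pointwise and concludes from $\lambda_k>0$. This is the same positive-rescaling step you carry out via \cref{lem:ray-invariance}. Your additional checks on extended-real arithmetic and on the infimum (including the case $V(\lambda)=-\infty$) are valid details that the paper leaves implicit.
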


\begin{proof}
For every $u\in U$, we have $H_\lambda(u)
    =
    \lambda_k
    \big(
        f_k(u)+\sum_{i\neq k}\alpha_i f_i(u)
    \big)
    =
    \lambda_k \widetilde H_\alpha^{(k)}(u)$. 
Since $\lambda_k>0$, the conclusion follows.
\end{proof}

In particular, \cref{lem:normalization} shows that the symmetric weighted-sum scalarization can always be rewritten in the anchored form treated in \cite{Fornasier2026}, without assigning any intrinsic privileged role to one objective.

\subsection{Exceptional sets on the simplex}

In the multi-parameter version of the Trade-off Invariance Principle \cite[Theorem~4.1]{Fornasier2026}, exceptional sets are negligible in the parameter space $\alpha\in(0,+\infty)^{N-1}$. Since in the present paper we use the simplex $\Delta_N^\circ$ as the parameter space for symmetric weighted-sum scalarizations, we now check that negligibility is preserved under the corresponding change of coordinates.

\begin{lemma}[Preservation of negligible sets on the simplex]
\label{lem:negligible-normalization}
Let $E\subset(0,+\infty)^{N-1}$ be Lebesgue negligible. Fix $k\in\{1,\dots,N\}$, and define
\begin{equation*}
    E_k^\Delta
    \coloneqq
    \biggl\{
        \lambda\in\Delta_N^\circ:
        \biggl(\frac{\lambda_i}{\lambda_k}\biggr)_{i\neq k}\in E
    \biggr\}.
\end{equation*}
Then $E_k^\Delta$ is negligible with respect to the natural $(N-1)$-dimensional Lebesgue measure on $\Delta_N^\circ$.
\end{lemma}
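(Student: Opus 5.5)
The idea is to exhibit the map $\Phi_k\colon\Delta_N^\circ\to(0,+\infty)^{N-1}$, $\lambda\mapsto(\lambda_i/\lambda_k)_{i\neq k}$, as a smooth diffeomorphism onto $(0,+\infty)^{N-1}$. Then $E_k^\Delta=\Phi_k^{-1}(E)$ is the image of a null set under a locally Lipschitz map, hence null. To make ``natural Lebesgue measure on $\Delta_N^\circ$'' concrete, I will identify $\Delta_N^\circ$ with an open subset of $\R^{N-1}$ via the chart $\pi_k\colon\lambda\mapsto(\lambda_i)_{i\neq k}$. Its image is
\[
D_k\coloneqq\Big\{\mu\in(0,+\infty)^{N-1}:\sum_{i\neq k}\mu_i<1\Big\},
\]
and we recover $\lambda_k=1-\sum_{i\neq k}\mu_i$. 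This chart is affine, so the $(N-1)$-dimensional Hausdorff measure on $\Delta_N^\circ$ is a constant multiple of the pushforward of Lebesgue measure on $D_k$. The constant is $\sqrt N$, but its value plays no role. Consequently, null sets in either sense coincide, and it suffices to show that $\pi_k(E_k^\Delta)$ is Lebesgue null in $D_k$.

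In these coordinates the relevant map is $\psi\colon D_k\to(0,+\infty)^{N-1}$ given by
\[
\psi(\mu)=\frac{\mu}{1-\sum_{i\neq k}\mu_i}.
\]
We have $\pi_k(E_k^\Delta)=\psi^{-1}(E)$. I would write down the inverse explicitly. Given $\alpha\in(0,+\infty)^{N-1}$, the normalization $\lambda_k=1/(1+\sum_{i\neq k}\alpha_i)$ and $\lambda_i=\alpha_i\lambda_k$ shows that
\[
\psi^{-1}(\alpha)=\frac{\alpha}{1+\sum_{i\neq k}\alpha_i}.
\]
This map is smooth on $(0,+\infty)^{N-1}$, and one checks directly that it composes with $\psi$ to the identity in both orders. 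So $\psi$ is a bijection with a $C^1$ inverse, and $\psi^{-1}(E)=\psi^{-1}|_E$ is the image of $E$ under the $C^1$ map $g\coloneqq\psi^{-1}$.

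The final step is the standard fact that a $C^1$ map $g$ between open subsets of $\R^{N-1}$ sends Lebesgue-null sets to null sets. To prove it, exhaust $(0,+\infty)^{N-1}$ by countably many compact cubes $K_j$. On each $K_j$ the map $g$ is Lipschitz with some constant $L_j$. Cover $E\cap K_j$ by cubes of total volume less than $\varepsilon$; their images are contained in sets of total volume at most $C(L_j)\varepsilon$. Hence each $g(E\cap K_j)$ is null, and a countable union of null sets is null.

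There is also a measurability subtlety: $E$ might not be Borel. I would avoid it by working with outer measure, or by replacing $E$ with a Borel null superset $\tilde E\supseteq E$. Since $E_k^\Delta\subseteq\Phi_k^{-1}(\tilde E)$, the null-set property passes to $E_k^\Delta$. The only real obstacle is fixing precisely what ``natural measure'' means on the simplex, and the affine-chart identification above handles that.
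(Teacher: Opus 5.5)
Your proof is correct and follows the paper's route. The paper likewise uses the explicit smooth map $\alpha\mapsto \alpha/(1+\sum_i\alpha_i)$, keeping the extra coordinate $1/(1+\sum_i\alpha_i)$ so that it maps directly into $\Delta_N^\circ\subset\R^N$; it notes that $E_N^\Delta$ is the image of $E$ under this map and concludes because locally Lipschitz maps send null sets to null sets. Where the paper cites Federer for that last fact, your affine chart and covering argument prove it directly (in the covering step, intersect the cubes with $K_j$ before applying the Lipschitz bound).
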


\begin{proof}
It is enough to prove the statement for $k=N$, since the general case follows by relabelling the objectives.
Define $\Psi\colon(0,+\infty)^{N-1}\to\Delta_N^\circ$ by
\begin{equation*}
    \Psi(\alpha_1,\dots,\alpha_{N-1})
    \coloneqq
    \frac{1}{1+\sum_{i=1}^{N-1}\alpha_i}
    (\alpha_1,\dots,\alpha_{N-1},1).
\end{equation*}
Then $\Psi$ is smooth, hence locally Lipschitz, and for every
$\alpha\in(0,+\infty)^{N-1}$ one has
\begin{equation*}
    \frac{\Psi_i(\alpha)}{\Psi_N(\alpha)}=\alpha_i,
    \qquad i=1,\dots,N-1.
\end{equation*}
Therefore, we conclude that $E_N^\Delta=\Psi(E)$.
Since $E$ is Lebesgue negligible in $\R^{N-1}$ and $\Psi$ is locally Lipschitz, the image $\Psi(E)$ is negligible with respect to the induced $(N-1)$-dimensional Lebesgue measure on $\Delta_N^\circ$ (see, e.g., \cite[Corollary~2.10.11]{Federer1996}).
This proves the claim.
\end{proof}

\begin{remark}
\Cref{lem:normalization,lem:negligible-normalization} allow us to transfer the Trade-off Invariance Principle of \cite[Theorem~4.1]{Fornasier2026} from the anchored family $f_k+\sum_{i\neq k}\alpha_i f_i$
to the symmetric weighted-sum scalarization \eqref{eq:weighted-sum} parametrized by $\lambda\in\Delta_N^\circ$.
Choosing one objective as reference is only a change of coordinates on the simplex, and negligible exceptional sets remain negligible under this parametrization.
\end{remark}

\section{Generic objective-space uniqueness for weighted scalarizations} \label{section:generic-uniqueness}

We now derive the multi-objective counterpart of the Trade-off Invariance Principle.
The main point is that, outside a negligible set of weights in the simplex, possible non-uniqueness for the scalarized problem \eqref{eq:weighted-sum} does not produce different objective vectors.

\begin{theorem}[Objective-space invariance for minimizers]
\label{thm:objective-invariance-minimizers}
Let $H_\lambda$ and $\M_\lambda$ be as in \eqref{eq:weighted-sum} and \eqref{eq:set_minimizers}, respectively.
Then, for almost every $\lambda\in\Delta_N^\circ$, the implication
\begin{equation*}
    u_1,u_2\in\M_\lambda
    \quad\Longrightarrow\quad
    f(u_1)=f(u_2)
\end{equation*}
holds.
Equivalently, for almost every $\lambda\in\Delta_N^\circ$, the set $f(\M_\lambda)$ contains at most one point.
\end{theorem}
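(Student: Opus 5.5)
The plan is to reduce to the anchored Trade-off Invariance Principle of \cite[Theorem~4.1]{Fornasier2026} and then transport its exceptional set to the simplex.

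\emph{Step 1 (anchoring).} Fix the reference index $k=N$ and set $F\coloneqq f_N$ and $G_j\coloneqq f_j$ for $j=1,\dots,N-1$. For $\alpha\in(0,+\infty)^{N-1}$ this gives the family $\widetilde H^{(N)}_\alpha=F+\sum_{j}\alpha_jG_j$. Before invoking \cite[Theorem~4.1]{Fornasier2026}, I will check that its standing hypotheses hold here: $U$ is an arbitrary set, the functionals take values in $\Rext$, and $\operatorname{dom} f\neq\varnothing$. If that theorem requires the infimum to be finite, I will treat the parameters with $\inf\widetilde H^{(N)}_\alpha=-\infty$ separately: in that case $\M_\lambda=\varnothing$, so the implication holds vacuously.

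\emph{Step 2 (conclusion in anchored coordinates).} The theorem yields a Lebesgue negligible set $E\subset(0,+\infty)^{N-1}$ such that, for every $\alpha\notin E$, any two minimizers $u_1,u_2$ of $\widetilde H^{(N)}_\alpha$ satisfy $G_j(u_1)=G_j(u_2)$ for all $j=1,\dots,N-1$. The remaining objective is then forced as well. Since $u_1,u_2$ are minimizers of a functional that is not identically $+\infty$ (because $\operatorname{dom} f\neq\varnothing$), the common value $\widetilde H^{(N)}_\alpha(u_1)=\widetilde H^{(N)}_\alpha(u_2)$ is finite and not $-\infty$. Hence every $f_j(u_\ell)$ is finite: the $f_j$ never take the value $-\infty$, and a finite sum with positive weights of values in $\Rext$ is finite only if every term is finite. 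Subtracting the equal finite quantities $\sum_j\alpha_jG_j(u_\ell)$ from the equal values of $\widetilde H^{(N)}_\alpha$ gives $f_N(u_1)=f_N(u_2)$. Therefore $f(u_1)=f(u_2)$.

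\emph{Step 3 (transfer to the simplex).} Let $E^\Delta_N$ be the set from \cref{lem:negligible-normalization}. That lemma shows $E^\Delta_N$ is negligible in $\Delta_N^\circ$. Now take $\lambda\in\Delta_N^\circ\setminus E^\Delta_N$ and put $\alpha_i=\lambda_i/\lambda_N$, so that $\alpha\notin E$. By \cref{lem:normalization}, $\M_\lambda$ coincides with the set of minimizers of $\widetilde H^{(N)}_\alpha$, and Step 2 gives the desired implication. The equivalent formulation, that $f(\M_\lambda)$ contains at most one point, is a restatement of the implication.

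The main obstacle is Step 1. The precise hypotheses and conclusion of \cite[Theorem~4.1]{Fornasier2026} must be matched exactly, since the plan relies entirely on it. I need to confirm three things: that it allows values in $\Rext$ with no structure on $U$; that its invariance statement covers all $N-1$ non-reference terms; and that it handles the cases where $\M_\lambda=\varnothing$ or the infimum is $-\infty$. The bookkeeping of infinite values in Step 2 is routine once this is settled.
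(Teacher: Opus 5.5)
Your proposal is correct and follows the paper's proof essentially step for step: anchor at a single index, use the Trade-off Invariance Principle to make the non-reference objectives constant on the minimizer set, recover the reference objective by subtraction, and transport the exceptional set to $\Delta_N^\circ$ via \cref{lem:negligible-normalization} and \cref{lem:normalization}. The differences are minor: the paper cites the minimizer version \cite[Corollary~4.1]{Fornasier2026}, which also follows from Theorem~4.1 applied to the constant sequences $u_1,u_1,\dots$ and $u_2,u_2,\dots$ (the existence of a minimizer forces the infimum to be finite), and your explicit finiteness check justifies the subtraction step that the paper leaves implicit.
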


\begin{proof}
Fix $\ell\in\{1,\dots,N\}$. By \cref{lem:normalization}, choosing $f_\ell$ as reference rewrites the scalarized functional \eqref{eq:weighted-sum} in the form
\begin{equation*}
    \widetilde H_\alpha^{(\ell)}
    =
    f_\ell+\sum_{i\neq \ell}\alpha_i f_i,
    \qquad
    \alpha_i=\frac{\lambda_i}{\lambda_\ell},
    \quad i\neq \ell.
\end{equation*}
By \cite[Corollary~4.1]{Fornasier2026}, there exists a negligible set
\begin{equation*}
    E_\ell\subset(0,+\infty)^{N-1}
\end{equation*}
such that, for every $\alpha\notin E_\ell$, whenever the minimizer set of $\widetilde H_\alpha^{(\ell)}$ is nonempty, every non-reference component $f_i$, $i\neq \ell$, is constant on that minimizer set. Since $\widetilde H_\alpha^{(\ell)}$ itself is constant on its minimizer set, it follows by subtraction that also the reference component $f_\ell$ is constant there. Hence, for every $\alpha\notin E_\ell$, the whole vector-valued map $f$ is constant on the minimizer set of $\widetilde H_\alpha^{(\ell)}$.
Let us introduce 
\begin{equation}  \label{eq:def_negligible_ell}
    E_\ell^\Delta
    \coloneqq
    \biggl\{
        \lambda\in\Delta_N^\circ:
        \biggl(\frac{\lambda_i}{\lambda_\ell}\biggr)_{i\neq \ell}\in E_\ell
    \biggr\}.
\end{equation}
By \cref{lem:negligible-normalization}, the set $E_\ell^\Delta$ is negligible in $\Delta_N^\circ$.
Finally, let $\lambda\in\Delta_N^\circ\setminus E_\ell^\Delta$.
Then the minimizer sets of $H_\lambda$ and of $\widetilde H_\alpha^{(\ell)}$ coincide by \cref{lem:normalization}, and therefore $f$ is constant on $\M_\lambda$.
This proves the claim.
\end{proof}

We now pass from exact to approximate minimization. The relevant issue is whether the limiting objective vector along minimizing sequences is uniquely determined by the scalarization, or may depend on the particular minimizing sequence.

\begin{theorem}[Objective-space invariance for minimizing sequences]
\label{thm:objective-invariance-min-seq}
Let $H_\lambda$ and $V(\lambda)$ be as in \eqref{eq:weighted-sum} and \eqref{eq:value-function}, respectively. Then, for almost every $\lambda\in\Delta_N^\circ$, the following property holds: if $V(\lambda)>-\infty$, then there exists a vector $y_\lambda=(y_{\lambda,1},\dots,y_{\lambda,N})\in[-\infty,+\infty]^N$ such that, for every minimizing sequence $(u_k)_{k\in\NN}$ for $H_\lambda$,
\begin{equation*}
    \lim_{k\to\infty} f_i(u_k)=y_{\lambda,i}
    \qquad
    \text{for every } i=1,\dots,N.
\end{equation*}
In particular, the limiting objective vector is uniquely determined by $\lambda$ and does not depend on the chosen minimizing sequence.
\end{theorem}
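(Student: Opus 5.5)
We follow the scheme of the proof of \cref{thm:objective-invariance-minimizers}, now using the minimizing-sequence part of the multi-parameter Trade-off Invariance Principle \cite[Theorem~4.1]{Fornasier2026} in place of its minimizer corollary. Fix a reference index $\ell\in\{1,\dots,N\}$. By \cref{lem:normalization}, $H_\lambda=\lambda_\ell\widetilde H^{(\ell)}_\alpha$ with $\alpha_i=\lambda_i/\lambda_\ell$, so the two functionals have the same minimizing sequences. Moreover,
\[
\widetilde V^{(\ell)}(\alpha)\coloneqq\inf_U\widetilde H^{(\ell)}_\alpha=V(\lambda)/\lambda_\ell ,
\]
and hence $V(\lambda)>-\infty$ if and only if $\widetilde V^{(\ell)}(\alpha)>-\infty$.

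\cite[Theorem~4.1]{Fornasier2026} gives a negligible set $E_\ell\subset(0,+\infty)^{N-1}$ with the following property. For every $\alpha\notin E_\ell$ with finite infimum, there are limits $y_{i}\in[-\infty,+\infty]$, $i\neq\ell$, such that $f_i(u_k)\to y_{i}$ along every minimizing sequence of $\widetilde H^{(\ell)}_\alpha$. Define $E^\Delta_\ell$ as in \eqref{eq:def_negligible_ell}. By \cref{lem:negligible-normalization} it is negligible in $\Delta_N^\circ$. For $\lambda\notin E^\Delta_\ell$ we therefore get limits, independent of the sequence, for every non-reference component.

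It remains to recover the reference component $f_\ell$. With a single anchor $\ell$ we could try to write $f_\ell(u_k)=\widetilde H^{(\ell)}_\alpha(u_k)-\sum_{i\neq\ell}\alpha_i f_i(u_k)$. This fails when some $y_i=\pm\infty$, because an indeterminate $\infty-\infty$ form may appear. The values $+\infty$ can occur only on finitely many initial terms, since $H_\lambda(u_k)<+\infty$ eventually. However, opposite infinities among the $f_i(u_k)$ are a real possibility.

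The remedy is the main point of the argument. We use two different anchors and set
\[
E\coloneqq E^\Delta_1\cup E^\Delta_2 ,
\]
which is still negligible since $N\ge2$. For $\lambda\notin E$, anchoring at $\ell=1$ yields limits of $f_i(u_k)$ for all $i\neq1$. Anchoring at $\ell=2$ yields limits for all $i\neq2$, in particular for $f_1$. Every component is non-reference for at least one of the two anchors, so every $f_i(u_k)$ converges to some $y_{\lambda,i}\in[-\infty,+\infty]$. This holds along every minimizing sequence, with limits independent of the sequence.

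The one point to check is that the minimizing-sequence classes agree across anchors, so that ``every minimizing sequence'' refers to the same family in both cases. This follows from \cref{lem:normalization} applied twice, since both anchored functionals are positive multiples of $H_\lambda$. Likewise, the hypothesis $V(\lambda)>-\infty$ transfers to both anchored problems by the identity above. The main obstacle is the precise form of \cite[Theorem~4.1]{Fornasier2026}: whether its limits are allowed in $[-\infty,+\infty]$ and exactly which hypothesis on the infimum it requires. We adapt the conclusion to that form, and the two-anchor trick avoids any need to control the reference term directly.
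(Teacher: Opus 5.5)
Your proposal is correct and follows essentially the same route as the paper. You anchor at a reference objective, invoke \cite[Theorem~4.1]{Fornasier2026}, transfer the exceptional set to the simplex via \cref{lem:negligible-normalization}, and recover each component as a non-reference term of some anchor rather than by subtraction. The only differences are that the paper removes $\bigcup_{\ell=1}^N E_\ell^\Delta$ where you note that the two anchors $\ell=1,2$ already suffice, and that your remark about possible opposite infinite limits states explicitly why the subtraction step from \cref{thm:objective-invariance-minimizers} is avoided here, which the paper leaves implicit.
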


\begin{proof}
Fix $\ell\in\{1,\dots,N\}$. By \cref{lem:normalization}, choosing $f_\ell$ as reference rewrites the scalarized functional \eqref{eq:weighted-sum} in the form
\begin{equation*}
    \widetilde H_\alpha^{(\ell)}
    =
    f_\ell+\sum_{i\neq \ell}\alpha_i f_i,
    \qquad
    \alpha_i=\frac{\lambda_i}{\lambda_\ell},
    \quad i\neq \ell.
\end{equation*}
By \cite[Theorem~4.1]{Fornasier2026}, there exists a negligible set $E_\ell\subset(0,+\infty)^{N-1}$ such that, for every $\alpha\notin E_\ell$, if the infimum of $\widetilde H_\alpha^{(\ell)}$ is larger than $-\infty$, then every non-reference component $f_i$, $i\neq \ell$, has a limit along every minimizing sequence, and this limit is independent of the chosen minimizing sequence.
Let $E_\ell^\Delta$ be defined as in \cref{eq:def_negligible_ell}.
By \cref{lem:negligible-normalization}, the set $E_\ell^\Delta$ is negligible in $\Delta_N^\circ$. Now set $E^\Delta \coloneqq \bigcup_{\ell=1}^N E_\ell^\Delta$.
Then $E^\Delta$ is negligible.\\
Let $\lambda\in\Delta_N^\circ\setminus E^\Delta$ be such that $V(\lambda)>-\infty$, and let $(u_k)_{k\in\NN}$ be a minimizing sequence for $H_\lambda$. Fix $i\in\{1,\dots,N\}$ and choose $\ell\neq i$. Since $\lambda\notin E_\ell^\Delta$, the previous conclusion applies to the anchored functional $\widetilde H_\alpha^{(\ell)}$. By \cref{lem:normalization}, minimizing sequences for $H_\lambda$ and for $\widetilde H_\alpha^{(\ell)}$ coincide. Hence $\lim_{k\to\infty} f_i(u_k)$ exists in $[-\infty,+\infty]$ and is independent of the chosen minimizing sequence.
Since $i$ is arbitrary, all objective components have uniquely determined limits. Denoting them by $y_{\lambda,i}$ yields the desired vector $y_\lambda$.
\end{proof}

In the bi-objective case, the simplex is one-dimensional, and the genericity statements above reduce to the one-parameter Trade-off Invariance Principle \cite[Theorem~1.2]{Fornasier2026}.

\begin{corollary}[The bi-objective case]
\label{cor:bi-objective}
Assume $N=2$, and let $H_\lambda$, $V(\lambda)$, and $\M_\lambda$ be as in \eqref{eq:weighted-sum}, \eqref{eq:value-function}, and \eqref{eq:set_minimizers}. Then the exceptional set of weights $\lambda\in\Delta_2^\circ$ for which the conclusions of \cref{thm:objective-invariance-minimizers,thm:objective-invariance-min-seq} may fail is at most countable. Equivalently, outside at most countably many ratios $\alpha={\lambda_2}/{\lambda_1}\in(0,+\infty)$, both conclusions hold for the scalarization $f_1+\alpha f_2$.
\end{corollary}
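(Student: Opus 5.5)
For $N=2$ I will not re-run the measure-theoretic arguments of \cref{thm:objective-invariance-minimizers,thm:objective-invariance-min-seq}. Instead I will reduce to the one-parameter Trade-off Invariance Principle \cite[Theorem~1.2]{Fornasier2026}, which gives an at most countable exceptional set. First I take $f_1$ as the reference term. By \cref{lem:normalization} with $k=1$, for $\lambda\in\Delta_2^\circ$ the functional $H_\lambda$ has the same minimizers and minimizing sequences as $f_1+\alpha f_2$, where $\alpha=\lambda_2/\lambda_1$. The map $\lambda\mapsto\lambda_2/\lambda_1$ is a bijection from $\Delta_2^\circ$ onto $(0,+\infty)$, with inverse $\alpha\mapsto(1/(1+\alpha),\alpha/(1+\alpha))$. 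Hence countable sets of ratios correspond exactly to countable sets of weights, and the two formulations in the statement are equivalent. Moreover, $V(\lambda)=\lambda_1\inf(f_1+\alpha f_2)$, so $V(\lambda)>-\infty$ if and only if the anchored infimum is $>-\infty$.

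Next I invoke \cite[Theorem~1.2]{Fornasier2026} for the family $f_1+\alpha f_2$. It provides an at most countable set $E\subset(0,+\infty)$ such that, for $\alpha\notin E$, two properties hold. First, $f_2$ is constant on the minimizer set. Second, when the infimum is finite, $f_2$ has a limit along every minimizing sequence, and this limit does not depend on the sequence. The remaining step is to recover the reference component $f_1$.

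For minimizers this is done by subtraction, as in \cref{thm:objective-invariance-minimizers}. On the minimizer set, $f_1+\alpha f_2$ equals the finite value $V(\lambda)/\lambda_1$, and $f_2$ is constant and finite there, so $f_1$ is constant as well. For minimizing sequences the subtraction is more delicate because the limit of $f_2$ might be infinite. I would avoid this issue by symmetry, exactly as in \cref{thm:objective-invariance-min-seq}. I also apply the one-parameter theorem with $f_2$ as reference, to the family $f_2+\beta f_1$ with $\beta=\lambda_1/\lambda_2=1/\alpha$. This gives a second countable exceptional set $E'$, which I pull back to ratios $\alpha$ via $\alpha\mapsto1/\alpha$; the pulled-back set is still countable. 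Outside $E\cup\{1/\beta:\beta\in E'\}$, which is a countable union, both $f_1$ and $f_2$ have sequence-independent limits. Taking the union of the exceptional sets for the minimizer statement and the minimizing-sequence statement keeps the total exceptional set countable.

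I expect the main obstacle to be checking what \cite[Theorem~1.2]{Fornasier2026} asserts about the non-reference term. It may only give a limit of $f_2$ and not directly of $f_1$, and it may be stated under a finiteness hypothesis phrased differently from $V(\lambda)>-\infty$. The symmetric double anchoring above is designed to make the argument independent of these details, so the only facts needed are that countable sets are preserved under $\alpha\mapsto 1/\alpha$ and under the bijection onto $\Delta_2^\circ$.
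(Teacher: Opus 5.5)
Your proposal is correct and follows essentially the same route as the paper. Both use the bijection $\lambda\mapsto\lambda_2/\lambda_1$ from $\Delta_2^\circ$ onto $(0,+\infty)$ together with the one-parameter Trade-off Invariance Principle of \cite{Fornasier2026}; the paper cites Theorem~1.2 there for minimizing sequences and Corollary~3.1 for minimizers, and the latter also follows from the former by applying it to the alternating sequence $u_1,u_2,u_1,u_2,\dots$. Your explicit recovery of $f_1$ is sound, but the double anchoring is not needed. Since $V(\lambda)$ is finite and the $f_i(u_k)$ are eventually finite, any limit of $f_2(u_k)$ in $[-\infty,+\infty]$, including an infinite one, already determines the limit of $f_1(u_k)=(H_\lambda(u_k)-\lambda_2 f_2(u_k))/\lambda_1$.
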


\begin{proof}
For $N=2$, the simplex $\Delta_2^\circ$ is in one-to-one correspondence with $(0,+\infty)$ through the ratio $\alpha=\lambda_2/\lambda_1$. Moreover, by \cref{lem:ray-invariance}, the weighted-sum scalarization \eqref{eq:weighted-sum} depends only on this ratio, since
$H_\lambda$ is equivalent, up to multiplication by a positive constant, to the one-parameter family $f_1+\alpha f_2$.
The minimizing-sequence statement then follows from \cite[Theorem~1.2]{Fornasier2026}, and the minimizer statement follows from \cite[Corollary~3.1]{Fornasier2026}. In both cases, the exceptional set is at most countable. 
\end{proof}

\begin{remark} \label{rem:objective-space-single-valued}
In the terminology of \eqref{eq:weighted-sum} and \eqref{eq:set_minimizers}, \cref{thm:objective-invariance-minimizers} says that weighted scalarizations are generically single-valued in objective space. Thus, although $\M_\lambda$ may contain several distinct elements of $U$, for almost every weight vector all such elements represent the same trade-off:
\begin{equation*}
    u_1,u_2\in\M_\lambda
    \quad\Longrightarrow\quad
    f(u_1)=f(u_2).
\end{equation*}
By \cref{thm:objective-invariance-min-seq}, the analogous statement holds at the level of approximate minimization: for almost every $\lambda$, all minimizing sequences for $H_\lambda$ in \eqref{eq:weighted-sum} determine the same limiting objective vector.
\end{remark}

\subsection{Geometric interpretation in the objective space}

For completeness, we spell out the geometric meaning of the previous results in the objective space; see \cite{Ehrgott2005} for general background on multi-objective optimization.
The attainable objective set associated with \eqref{eq:MOO} is
\begin{equation*}
    Y
    \coloneqq
    f(U)
    =
    \bigl\{
        f(u):u\in U
    \bigr\}
    \subset (\R\cup\{+\infty\})^N.
\end{equation*}
Thus, an objective vector is simply a vector $f(u)=\bigl(f_1(u),\dots,f_N(u)\bigr)$ of objective values. For a fixed weight vector $\lambda\in\Delta_N^\circ$, the scalarized problem \eqref{eq:weighted-sum} is equivalent to minimizing the linear functional $y\mapsto \lambda\cdot y$ over $Y$, since $V(\lambda)
    =
    \inf_{u\in U}\lambda\cdot f(u)
    =
    \inf_{y\in Y}\lambda\cdot y$.
When $\M_\lambda\neq\varnothing$, the set of objective vectors selected by the scalarization is
\begin{equation*}
    Y_\lambda
    \coloneqq
    f(\M_\lambda)
    =
    \bigl\{
        y\in Y:
        \lambda\cdot y=V(\lambda)
    \bigr\}.
\end{equation*}
In this sense, $Y_\lambda$ is the subset of the attainable objective set exposed by the level hyperplane $\{y\in\R^N:\lambda\cdot y=V(\lambda)\}$. Here ``exposed'' means that the linear functional $y\mapsto\lambda\cdot y$ attains its minimum on $Y$ precisely on that subset.

\begin{remark}
\label{rem:geometric-interpretation}
With the notation above, \cref{thm:objective-invariance-minimizers} says that, for almost every $\lambda\in\Delta_N^\circ$, the exposed set $Y_\lambda=f(\M_\lambda)$ contains at most one point. Equivalently, for almost every weight vector in the simplex, the supporting hyperplane associated with the scalarization \eqref{eq:weighted-sum} does not select two distinct attainable objective vectors.
\end{remark}

\begin{remark}[Relation with classical efficiency notions]
\label{rem:weighted-sum-efficiency}
The geometric picture above is consistent with the classical interpretation of weighted-sum scalarization in multi-objective optimization. Indeed, in the objective-space approach, solutions of weighted-sum scalarizations with strictly positive weights correspond to efficient objective vectors, while scalarizations with merely nonnegative weights correspond to weakly efficient ones; under suitable convexity assumptions, converse statements are available as well, see \cite[Chapter~3]{Ehrgott2005}. In this terminology, for $\lambda\in\Delta_N^\circ$ the set $Y_\lambda=f(\M_\lambda)$ consists of \emph{nondominated points} \cite[Theorem~3.6]{Ehrgott2005}. Hence \cref{thm:objective-invariance-minimizers} may be read as a generic uniqueness statement for the nondominated point exposed by a positive weighted-sum scalarization.
\end{remark}

\subsection{Asymptotically exposed objective vectors}

The minimizing-sequence result admits a natural geometric interpretation in the closure of the attainable objective set. In this subsection, we restrict to the finite-valued case $Y\subset\R^N$, so that convergence in the objective space is understood in the usual Euclidean sense.

\begin{defn}[Asymptotically exposed objective vectors]
\label{def:aexp}
Let $\lambda\in\Delta_N^\circ$. We define the set of asymptotically exposed objective vectors by
\begin{equation} \label{eq:def_aexp}
    \operatorname{AExp}_\lambda(Y)
    \coloneqq
    \big\{
        y\in\R^N:
        \exists (y_k)_{k\in\NN}\subset Y
        \text{ with }
        y_k\to y
        \text{ and }
        \lambda\cdot y_k\to V(\lambda)
    \big\}.
\end{equation}
\end{defn}

In other words, $\operatorname{AExp}_\lambda(Y)$ consists of those limit points in $\overline Y$ that are reached by objective vectors associated with minimizing sequences for the scalarized problem \eqref{eq:weighted-sum}.

\begin{proposition}[Asymptotic exposure and the closure of $Y$]
\label{prop:aexp-closure}
Assume that $Y\subset\R^N$. Then, for every $\lambda\in\Delta_N^\circ$,
\begin{equation*}
    \operatorname{AExp}_\lambda(Y)
    =
    \bigl\{
        y\in\overline Y:
        \lambda\cdot y=V(\lambda)
    \bigr\}.
\end{equation*}
\end{proposition}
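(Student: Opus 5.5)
The plan is to prove the two inclusions separately. The only tools needed are the continuity of the linear functional $y\mapsto\lambda\cdot y$ on $\R^N$ and the identity $V(\lambda)=\inf_{y\in Y}\lambda\cdot y$ noted above. We also use that $V(\lambda)<+\infty$, since $Y\subset\R^N$ is nonempty. A preliminary observation handles the case $V(\lambda)=-\infty$. Then no real sequence $\lambda\cdot y_k$ with $y_k$ convergent in $\R^N$ can tend to $-\infty$, so $\operatorname{AExp}_\lambda(Y)=\varnothing$. Likewise no $y\in\overline Y\subset\R^N$ satisfies $\lambda\cdot y=-\infty$, so both sets are empty and the identity holds trivially. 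From now on we assume $V(\lambda)\in\R$.

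For the inclusion ``$\subseteq$'', take $y\in\operatorname{AExp}_\lambda(Y)$ with an approximating sequence $(y_k)\subset Y$ as in \eqref{eq:def_aexp}. Then $y\in\overline Y$ by definition of closure. Continuity gives $\lambda\cdot y_k\to\lambda\cdot y$. Uniqueness of limits together with $\lambda\cdot y_k\to V(\lambda)$ gives $\lambda\cdot y=V(\lambda)$.

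For the inclusion ``$\supseteq$'', take $y\in\overline Y$ with $\lambda\cdot y=V(\lambda)$. Choose any sequence $(y_k)\subset Y$ with $y_k\to y$, which exists because $Y\subset\R^N$ and $\R^N$ is metric. Continuity then yields $\lambda\cdot y_k\to\lambda\cdot y=V(\lambda)$, so $y\in\operatorname{AExp}_\lambda(Y)$. It is worth remarking that in fact $\lambda\cdot y'\ge V(\lambda)$ for all $y'\in\overline Y$, again by continuity. Hence the right-hand side is exactly the set of minimizers of $\lambda\cdot(\,\cdot\,)$ over $\overline Y$, although this is not needed for the proof.

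There is no substantial obstacle. The only point requiring care is the degenerate case $V(\lambda)=-\infty$, where both sides must be checked to be empty rather than invoking continuity against an infinite value. The finite-valuedness assumption $Y\subset\R^N$ is used precisely to ensure that the closure and the convergence are taken in Euclidean $\R^N$.
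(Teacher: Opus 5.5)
Your proposal is correct and follows the same two-inclusion argument as the paper, based on continuity of $y\mapsto\lambda\cdot y$ and the sequential characterization of $\overline Y$ in $\R^N$. Your explicit treatment of the case $V(\lambda)=-\infty$, where both sets are empty, is a small clarification that the paper leaves implicit.
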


\begin{proof}
Let $y\in\operatorname{AExp}_\lambda(Y)$. By definition, there exists a sequence $(y_k)_{k\in\NN}\subset Y$ such that $y_k\to y$ and $\lambda\cdot y_k\to V(\lambda)$ as $k\to\infty$. Since $Y\subset\R^N$ and the map $y\mapsto \lambda\cdot y$ is continuous on $\R^N$, passing to the limit gives $\lambda\cdot y=V(\lambda)$. Hence
\begin{equation*}
    \operatorname{AExp}_\lambda(Y)
    \subset
    \bigl\{
        y\in\overline Y:
        \lambda\cdot y=V(\lambda)
    \bigr\}.
\end{equation*}
Conversely, let $y\in\overline Y$ be such that $\lambda\cdot y=V(\lambda)$. Since $y\in\overline Y$, there exists a sequence $(y_k)_{k\in\NN}\subset Y$ with $y_k\to y$. By continuity of $y\mapsto \lambda\cdot y$, we obtain $\lambda\cdot y_k\to \lambda\cdot y=V(\lambda)$. Therefore $y\in\operatorname{AExp}_\lambda(Y)$.
\end{proof}

\begin{corollary}[Generic uniqueness of asymptotically exposed objective vectors]
\label{cor:aexp-unique}
Assume that $Y\subset\R^N$. Then, for almost every $\lambda\in\Delta_N^\circ$ such that $V(\lambda)>-\infty$, the set $\operatorname{AExp}_\lambda(Y)$ contains at most one point.
\end{corollary}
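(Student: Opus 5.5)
We will deduce the statement from \cref{thm:objective-invariance-min-seq} together with \cref{prop:aexp-closure}. Let $E^\Delta\subset\Delta_N^\circ$ be the negligible exceptional set provided by \cref{thm:objective-invariance-min-seq}. Fix $\lambda\in\Delta_N^\circ\setminus E^\Delta$ with $V(\lambda)>-\infty$, and let $y_\lambda\in[-\infty,+\infty]^N$ be the vector given there. It suffices to show that every $y\in\operatorname{AExp}_\lambda(Y)$ equals $y_\lambda$. This forces $y_\lambda\in\R^N$ whenever the set is nonempty, and gives at most one point.

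So let $y\in\operatorname{AExp}_\lambda(Y)$. By \cref{def:aexp}, there is a sequence $(y_k)_{k\in\NN}\subset Y$ with $y_k\to y$ in $\R^N$ and $\lambda\cdot y_k\to V(\lambda)$. Since $Y=f(U)$, we pick $u_k\in U$ with $f(u_k)=y_k$. Then $H_\lambda(u_k)=\lambda\cdot f(u_k)\to V(\lambda)$, so $(u_k)$ is a minimizing sequence for $H_\lambda$. Applying \cref{thm:objective-invariance-min-seq}, we get $f_i(u_k)\to y_{\lambda,i}$ for every $i$. On the other hand, $f_i(u_k)=y_{k,i}\to y_i$. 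By uniqueness of limits in $[-\infty,+\infty]$, $y_i=y_{\lambda,i}$ for every $i$, hence $y=y_\lambda$. Therefore two points $y,y'\in\operatorname{AExp}_\lambda(Y)$ must both equal $y_\lambda$.

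There is no real obstacle. The only points needing care are, first, the selection of preimages $u_k$, which is a choice of one element per $k$ and is unproblematic. Second, the hypothesis $V(\lambda)>-\infty$ is exactly the one required to invoke \cref{thm:objective-invariance-min-seq}. Note that if $V(\lambda)=-\infty$, the set is empty anyway, since $\lambda\cdot y_k$ would converge to the finite value $\lambda\cdot y$. The exceptional set is the same negligible $E^\Delta$ as in \cref{thm:objective-invariance-min-seq}. \Cref{prop:aexp-closure} is not strictly needed, but it shows that the unique point, when it exists, lies in $\overline Y$ on the hyperplane $\lambda\cdot y=V(\lambda)$.
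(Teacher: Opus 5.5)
Your proof is correct and follows essentially the same route as the paper: you realize each asymptotically exposed point as the limit of $f(u_k)$ along a minimizing sequence $(u_k)_{k\in\NN}$, and you invoke \cref{thm:objective-invariance-min-seq} to identify that limit with the sequence-independent vector $y_\lambda$. The paper compares two points $y,z$ directly instead of comparing each one to $y_\lambda$, which is only a cosmetic difference; your side remark that $\operatorname{AExp}_\lambda(Y)=\varnothing$ whenever $V(\lambda)=-\infty$ is also correct.
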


\begin{proof}
Let $\lambda\in\Delta_N^\circ$ be such that the conclusion of \cref{thm:objective-invariance-min-seq} holds and $V(\lambda)>-\infty$. Take $y,z\in\operatorname{AExp}_\lambda(Y)$. By \cref{def:aexp}, there exist sequences $(y_k)_{k\in\NN},(z_k)_{k\in\NN}\subset Y$ such that $y_k\to y$, $z_k\to z$, and $\lambda\cdot y_k\to V(\lambda)$, $\lambda\cdot z_k\to V(\lambda)$.
Since $Y=f(U)$, we may write $y_k=f(u_k)$ and $z_k=f(v_k)$ for suitable sequences $(u_k)_{k\in\NN},(v_k)_{k\in\NN}\subset U$. The identities $\lambda\cdot y_k\to V(\lambda)$ and $\lambda\cdot z_k\to V(\lambda)$ mean that $(u_k)_{k\in\NN}$ and $(v_k)_{k\in\NN}$ are minimizing sequences for $H_\lambda$. Hence \cref{thm:objective-invariance-min-seq} yields $    \lim_{k\to\infty} f(u_k)  = \lim_{k\to\infty} f(v_k)$.
Therefore $y=z$. This proves that $\#\operatorname{AExp}_\lambda(Y)\leq 1$.
\end{proof}

\begin{remark}[Asymptotic exposure and nondominance]
\label{rem:aexp-nondominated}
Assume that $Y\subset\R^N$, let $\lambda\in\Delta_N^\circ$, and let $y\in\operatorname{AExp}_\lambda(Y)$. Then $y$ is nondominated in $\overline Y$. Indeed, if there existed $z\in\overline Y$ with $z_i\le y_i$ for every $i=1,\dots,N$ and $z\neq y$, then, since all components of $\lambda$ are strictly positive, one would have $\lambda\cdot z<\lambda\cdot y=V(\lambda)$, contradicting \cref{prop:aexp-closure}. Thus, exact minimizers expose nondominated points of $Y$, while minimizing sequences expose nondominated points of $\overline Y$. In this sense, \cref{cor:aexp-unique} gives a generic uniqueness statement for the asymptotically exposed---hence asymptotically nondominated---objective vector selected by the scalarization.
\end{remark}

\subsection*{Acknowledgements}

J.K.~and A.S. acknowledge support from the ERC Advanced Grant NEITALG, grant agreement No.~101198055 (P.I.: Prof.~Massimo Fornasier).
A.S.~acknowledges partial support from INdAM--GNAMPA.

\bigskip
\begin{center}
  \FundingLogos
  
  \vspace{0.5em}
  \begin{tcolorbox}\centering\small
    Funded by the European Union. Views and opinions expressed are however those of the author(s) only and do not necessarily reflect those of the European Union or the European Research Council Executive Agency. Neither the European Union nor the granting authority can be held responsible for them.
    This project has received funding from the European Research Council (ERC) under the European Union’s Horizon Europe research and innovation programme (grant agreement No.~101198055, project acronym NEITALG).
    
  \end{tcolorbox}
\end{center}

\bibliographystyle{abbrv}
\bibliography{biblio}

\end{document}